\newtheorem{theo}{Theorem}
\newtheorem{theoetoile}{Theorem}
\newtheorem{lem}[theo]{Lemma}
\newtheorem{coro}[theo]{Corollary}
\newcommand{\bbR}{\mathbb{R}} 
\newcommand{\bbZ}{\mathbb{Z}} 
\newcommand{\set}[1]{\left\{#1\right\}}
\newcommand{\card}{\mathrm{card}}
\newcommand{\eps}{\varepsilon}
\newcommand{\lf}{\lfloor}
\newcommand{\rf}{\rfloor}
\title{Construction of a short path in high dimensional First Passage Percolation}
\author{Olivier Couronn\'e, Nathana\"el Enriquez, Lucas Gerin}
\begin{document}
\maketitle
\begin{abstract}
For First Passage Percolation in $\bbZ^d$ with large $d$, we construct a path connecting the origin to $\set{x_1 =1}$, whose passage time has optimal order $\log d/d$. Besides, an improved lower bound for the "diagonal" speed of the cluster combined with a result by Dhar (1988) shows that the limiting shape in FPP with exponential passage times (and thus that of Eden model) is not the euclidian ball in dimension larger than $35$.
\end{abstract}

{\bf \small Keywords:} {\small First passage percolation,  time constant, limit shape.}

{\bf \small AMS classification:} {\small 82C43}

\section{Introduction}
\newcommand{\dPerco}{\mathcal{D}}
Let $\set{\tau(x,y),(x,y)\mbox{ edges of }\bbZ^{d}}$ be a family of i.i.d. positive random variables. The quantity $\tau(x,y)$ is called the \emph{passage time} through edge $(x,y)$. For a 
\emph{path} $\mathcal{P}:x_0\to x_1 \to\dots \to x_n$ of neighbouring vertices, we denote by
$\tau(\mathcal{P})$ the passage time along $\mathcal{P}$:
$
\tau(\mathcal{P})=\sum_{i=1}^n \tau(x_{i-1},x_i).
$
The family $\set{\tau(x,y)}$ defines a random distance over $\bbZ^{d}$ as follows:
$$
\dPerco (x,y)=\inf \set{\tau(\mathcal{P})\ ;\ \mathcal{P}\text{ goes from $x$ to $y$. }}.
$$
We also set $B_t:= \set{x;\dPerco(0,x)\leq t}$.

This model is called first passage percolation. We refer to Kesten's St-Flour Lecture Notes \cite{Kes}
for a nice introduction to the subject.
In this work we focus on the case where the common distribution of the passage times is the exponential distribution with parameter one. This case has received a particular attention for at least two reasons:
\begin{itemize}
\item The process $t\mapsto B_t$ is then a Markov process. This is a consequence of the memorylessness
property of the exponential random variable.
\item Consider the random process $B_t$ only at random times at which a new vertex is added. Namely, set $t_0=0$ and $t_{k+1}=\inf \set{t>t_k\ ;\ B_t \neq B_{t-}}$ and look at the sequence $B_{t_0},B_{t_1},\dots$. This discrete process is 
known as the Eden growth process \cite{Eden}, which was introduced as a (very) simplified model for cells spread: to $B_{t_k}$ one adds a new vertex $x$ adjacent to $B_{t_k}$, with a probability which is proportional to the number of edges between $x$ and $B_{t_k}$.
\end{itemize}
We denote the canonical basis of $\bbR^{d}$ by $(e_1,\dots,e_{d})$.
For a subset $A\subset \bbR^{d}$, the random variable $T(A)$ is the first time 
the cluster $B_t$ hits $A$, that is, 
$$
T(A)=\min\set{t\geq 0\ ;\  B_t\cap A\neq \emptyset}.
$$
We will consider $T(A)$ for the particular set
$H_n=\set{x_{1} =n}$.
Subadditivity theory was historically introduced by Hammersley and Welsh 
to show that the sequence $T(n,0,\dots,0)/n$
converges almost surely to a constant $\mu=\mu(d)$, which is called the \emph{time constant}. 
It can be seen as a consequence of the work of Cox and Durrett \cite{Cox} that $\mu$ is also the limit
$$
\mu= \lim_{\mbox{a.s.}} \frac{T(H_n)}{n}.
$$
Kesten (\cite{Kes}, Th.8.2) was the first to prove that, for a large class of distributions for the $\tau$'s, the constant $\mu$ is of order $\log d/d$. His result was later improved by Dhar for exponential passage times.
\begin{theoetoile}[Dhar \cite{Dhar}]
For exponential passage times,
$$
\lim_{d\to \infty} \mu(d)\frac{d}{\log d}  = \frac{1}{2}.
$$
\end{theoetoile}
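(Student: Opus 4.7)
My plan is to prove the upper and lower bounds in Dhar's theorem separately; both rest on a first-moment computation for the number of lattice paths from $0$ to $H_n$ with small passage time, using the Cram\'er rate function of the $x_1$-projection of simple random walk on $\bbZ^d$.

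\textbf{Setup.} For integer $\ell \geq n$, let $N_\ell^{(n)}$ count the length-$\ell$ lattice walks from $0$ to $H_n$. The $x_1$-projection of a uniform step of SRW on $\bbZ^d$ is a 1D lazy walk with step distribution $+1,\,-1,\,0$ of probabilities $1/(2d), 1/(2d), (d-1)/d$ and log-moment generating function $\Lambda(\theta) = \log(\cosh\theta/d + (d-1)/d)$. Cram\'er's theorem for this walk gives
\[
\tfrac{1}{\ell}\log N_\ell^{(n)} \;=\; \log(2d) - I(n/\ell) + o(1), \qquad I := \Lambda^*.
\]
Each length-$\ell$ (self-avoiding) walk has passage time $\mathrm{Gamma}(\ell,1)$, and $P(\mathrm{Gamma}(\ell,1)\leq t) \sim (et/\ell)^\ell/\sqrt{2\pi\ell}$ for $t \ll \ell$. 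So with $\ell = \alpha n$ and $t = vn$, the expected number of such walks with passage time $\leq t$ is, up to polynomial factors,
\[
\exp\bigl(\alpha n\,[\log(2dev/\alpha) - I(1/\alpha)]\bigr).
\]

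\textbf{Lower bound.} By the union bound $P(T(H_n)\leq vn) \leq \sum_\ell N_\ell^{(n)} P(\mathrm{Gamma}(\ell,1)\leq vn)$, I would require the exponent $\alpha[\log(2dev/\alpha) - I(1/\alpha)]$ to be negative for every $\alpha \geq 1$. Optimizing gives the critical-point equation $I'(1/\alpha^*) = \alpha^*$, i.e.\ $\theta^* = 1/x^* = \alpha^*$; a direct calculation with the explicit $\Lambda$ yields $\theta^* \sim \log d - \log\log d$ and $I(1/\theta^*)\to 1$ as $d\to\infty$. The resulting threshold is $v^*(d) = (\alpha^*/2de)\exp(I(1/\alpha^*)) \sim \log d/(2d)$, so for $v < v^*$ the union bound goes to $0$ as $n\to\infty$, giving $\mu(d)\geq (1-\eps)\log d/(2d)$ for every $\eps>0$.

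\textbf{Upper bound.} By Fekete subadditivity applied to $n\mapsto E[T(H_n)]$, $\mu \leq E[T(H_1)]$. Take $\ell = \lfloor(1+\delta)\log d\rfloor$ and $t = (1+\delta)\log d/(2d)$: the same first-moment computation shows that the expected number $X$ of length-$\ell$ walks from $0$ to $H_1$ with passage time $\leq t$ is polynomially large in $d$. A Paley--Zygmund / second-moment argument then gives existence of such a walk with high probability: classifying pairs of walks by the number $k$ of shared edges and using the shared/private edge decomposition one obtains $P(\tau(W_1)\leq t,\tau(W_2)\leq t) \leq P(\tau(W_1)\leq t)^2/P(\mathrm{Gamma}(k,1)\leq t)$, which combined with a combinatorial count of pairs with overlap $k$ controls $E[X^2]$. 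Together with the trivial bound $T(H_1)\leq \tau(0,e_1)\sim\mathrm{Exp}(1)$ on the complementary event, this yields $E[T(H_1)]\leq(1+o(1))\log d/(2d)$.

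\textbf{Main obstacle.} The heart of the proof is the second-moment estimate for the upper bound: one must count pairs of length-$\ell$ walks from $0$ to $H_1$ sharing exactly $k$ edges for each $k$, and verify that the strongly overlapping pairs do not dominate $E[X^2]/E[X]^2$. The lower bound reduces to a one-variable optimization once the correct Cram\'er rate function $I$ is identified; it is worth emphasizing that the naive Gaussian (CLT) approximation $I(x)\approx x^2 d/2$ overestimates $I$ badly in the relevant range $x = \Theta(1/\log d)$, which is far above the CLT scale $x = \Theta(1/\sqrt d)$, so using it would spuriously suggest $\mu \geq \Omega(1/\sqrt d)$. The full rate function is therefore essential.
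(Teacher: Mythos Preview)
The paper does not contain a proof of this statement: Dhar's theorem is quoted as a known result from \cite{Dhar}, and the authors explicitly say that Dhar's argument is ``a recursive argument applied to $B_t$'' that exploits the Markov property of the exponential growth process --- not a path-counting argument at all. The paper's own contribution (Theorem~1) is a strictly weaker upper bound, $\limsup d\mu(d)/\log d\le e/4$, obtained by an explicit greedy tree-like construction of a path to $H_1$; the authors state outright that their method cannot reach the constant $1/2$.

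So there is no ``paper's proof'' to compare against, and your route is different from both Dhar's and the paper's. Your lower bound is essentially a sharpened version of Kesten's union-bound argument (\cite{Kes}, Th.~8.2), and the optimisation you sketch is correct: with the exact rate function $I$ for the lazy $\pm1$ walk one finds $\alpha^\star=\theta^\star\sim\log d$, $I(1/\alpha^\star)\to 1$, and hence the first-moment threshold $v^\star\sim\log d/(2d)$, giving the sharp lower bound. For the upper bound, your second-moment plan is a reasonable strategy in principle, but two points deserve caution. First, the inequality $P(\tau(W_1)\le t,\tau(W_2)\le t)\le P(\tau(W_1)\le t)^2/P(\Gamma(k,1)\le t)$ does not drop out of the shared/private decomposition and would itself need an argument. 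Second, working with $n=1$ forces you to prove $E[X^2]/E[X]^2=1+o(\log d/d)$ (since on the failure event you only have the crude bound $T(H_1)\le\tau(0,e_1)$ of order~$1$); it is both more natural and easier to run the second-moment method for paths to $H_n$ with $n\to\infty$, where $E[X]$ grows exponentially in $n$ and one only needs $E[X^2]/E[X]^2$ bounded. Even then, counting pairs of walks from $0$ to $H_n$ with a prescribed number of shared edges (which need not form a contiguous block) is exactly the combinatorial difficulty that the paper's greedy construction is designed to avoid --- at the price of the constant $e/4$.
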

Some numerical computations are included in \cite{Dhar}, showing that the convergence is quite fast.
Our aim in the present paper is to provide for a constructive proof of the fact that $\mu =\mathcal{O}(\log d/d)$, by
exhibiting a path going from the origin to $H_1$ with small passage time. This path, except its last edge, is included in $H_0$.


\section{A path $\mathcal{P}$ connecting $0$ to $\set{x_1 =1}$}
\subsection*{A tree-like construction of paths}
\newcommand{\dd}{\mathbf{d}}
\newcommand{\Los}[1]{\mathcal{L}^{#1}_a}
\newcommand{\Binom}{\mathcal{B}}

Fix an integer $\ell \geq 2$, and $p_1,..., p_\ell$ a collection of $\ell$ integers larger than 1. 
We want to associate to these integers a path 
${\mathcal P}(\ell, p_1, ..., p_\ell)$, having $\ell +1$ edges and 
connecting the origin to $H_1$, along which passage times are as small as possible.

The first $\ell$ edges
of this path lie in hyperplane $H_0$, and the last one connects $H_0$ to $H_1$.
The path  ${\mathcal P}(\ell, p_1, ..., p_\ell)$ is defined as follows:
\begin{itemize}
\item[Step 1] Among the $2(d-1)$ edges $(0,0\pm e_i)$ (with $2\leq i\leq d$), we consider the ones with the 
$p_1$ smallest passage times. This gives $p_1$ first edges ending at some vertices that we denote by
$x[1],\dots,x[p_1]$.

\item[Step 2] From each point $x_{1}[1],\dots,x_{1}[p_{1}]$, consider the $2(d-1)-2p_1$ edges 
that are not collinear with one of the edges that have been already used in Step 1.
Among these $p_{1}\times(2(d-1)-2p_1)$ distinct edges we choose the $p_{2}$ smallest ones. They end at some 
\emph{distinct} vertices that we denote by $x_{2}[1],\dots,x_{2}[p_{2}]$.

\item[...]
\item[Step $\ell$] From each point $x_{\ell -1}[1],\dots,x_{\ell -1}[p_{\ell -1}]$, consider the $2(d-1)-2p_1-2p_2-\dots -2p_{\ell -1}$ edges 
that are not collinear with one of the edges that have been already used in the previous steps.
Among these $p_{\ell -1}\times(2(d-1)-2p_1-2p_2-\dots -2p_{\ell -1})$ distinct edges we choose the $p_{\ell}$ smallest ones. They end at some 
\emph{distinct} vertices that we denote by $x_{\ell}[1],\dots,x_{\ell}[p_{\ell}]$.

\item[Step  $\ell+1$.] Among the  $p_\ell$ edges $(x_{\ell}[1],x_{\ell}[1]+e_0),\dots,(x_{\ell}[p_{\ell}],x_{\ell}[p_\ell]+e_0)$
we choose the one with the shortest passage time. 
We denote this path by $ x_\ell\to x_{\ell+1}$.
\end{itemize}
Backtracking from $x_{\ell+1}$ to $0$ defines our path: this is the only path
$$
\mathcal{P}(\ell, p_1,..., p_\ell):0\to x_1\to x_2\to x_3\to \dots \to x_{\ell-1}\to x_{\ell}\to x_{\ell+1}
$$
for which, for all $1\leq i\leq \ell -1$, the edge $(x_i,x_{i+1})$ is of the type $(x_i[r],x_{i+1}[s])$ for some 
integers $r,s$. 

\subsection*{The passage time of $\mathcal{P}$}

Now our main result states that, for large $d$, one can find among the paths $
\mathcal{P}(\ell, p_1,..., p_\ell)$ a path whose passage time is of optimal order:
\begin{theo}
$$
\limsup_{d\to\infty} \frac{d}{\log d} \inf_{\ell, p_1,..., p_\ell}\mathbb{E}[\tau(\mathcal{P}(\ell, p_1,..., p_\ell))]\leq \frac{e}{4}.
$$
\end{theo}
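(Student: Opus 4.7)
The plan is to upper bound $\mathbb{E}[\tau(\mathcal{P}(\ell, p_1, \dots, p_\ell))]$ edge by edge and then optimize over the parameters. Fix $\ell$ and $p_1,\dots,p_\ell$, set $p_0=1$, and denote by $N_i := 2 p_{i-1}(d-1-p_1-\dots-p_{i-1})$ the number of edges considered at step $i$ (for $1\leq i\leq \ell$); at that step, the $p_i$ smallest among $N_i$ i.i.d.\ Exp(1) passage times are selected. The crucial observation is that everything happening at levels $i+1,\dots,\ell+1$ depends on the set $\{x_i[1],\dots,x_i[p_i]\}$ but is invariant under permutations of their labels, and uses passage times of edges independent of those at level $i$. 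Hence the index $j\in\{1,\dots,p_i\}$ of the selected edge that ends up on the path (obtained by backtracking from $x_{\ell+1}$) is uniform on $\{1,\dots,p_i\}$ and independent of the level-$i$ passage times. Writing $T_{i,(1)}<\dots<T_{i,(N_i)}$ for the order statistics of the level-$i$ passage times, I obtain
$$
\mathbb{E}[\tau(x_{i-1}, x_i)] = \frac{1}{p_i}\sum_{j=1}^{p_i}\mathbb{E}[T_{i,(j)}] = \frac{1}{p_i}\sum_{s=1}^{p_i}\frac{s}{N_i-p_i+s},
$$
which is asymptotic to $p_i/(2 N_i)$ whenever $p_i/N_i\to 0$. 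The remaining edge at step $\ell+1$ is the minimum of $p_\ell$ independent Exp(1) variables and contributes $1/p_\ell$.

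Under the additional constraint $p_1+\dots+p_\ell = o(d)$, which forces $N_i \sim 2d\, p_{i-1}$, summing yields
$$
\mathbb{E}[\tau(\mathcal{P})] \leq (1+o(1))\sum_{i=1}^{\ell}\frac{p_i}{4d\, p_{i-1}} + \frac{1}{p_\ell}.
$$
I would then take $p_i = \lfloor c^i \rfloor$ for a real constant $c>1$, turning the estimate into essentially $\frac{\ell c}{4d}+c^{-\ell}$. Choosing $\ell$ so that $c^\ell$ grows like $d/\sqrt{\log d}$ makes $1/p_\ell = o(\log d/d)$, ensures $p_1+\dots+p_\ell = o(d)$, and gives $\ell\sim \log d/\log c$. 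Consequently
$$
\frac{d}{\log d}\, \mathbb{E}[\tau(\mathcal{P})] \leq \frac{c}{4\log c} + o(1).
$$
Since $c\mapsto c/\log c$ is minimized on $(1,\infty)$ at $c=e$ with value $e$, taking $c=e$ (or a nearby value) yields $\limsup_{d\to\infty}\frac{d}{\log d}\, \mathbb{E}[\tau(\mathcal{P})] \leq e/4$, which is the claimed bound. A quick AM-GM check on $\prod_i p_i/p_{i-1} = p_\ell$ confirms that geometric growth is optimal within this family.

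I expect the main subtle point to be a rigorous treatment of the exchangeability argument underlying the averaging formula for $\mathbb{E}[\tau(x_{i-1},x_i)]$: one must carefully decompose the construction into a positional part (the set of selected edges, a symmetric function of the level-$i$ passage times, together with the dependent structure at later levels) and an arbitrary labelling which can then be taken uniform. The remainder is a routine asymptotic analysis of harmonic sums under the constraints $p_i/N_i\to 0$ and $p_1+\dots+p_\ell=o(d)$, together with the observation that rounding $c^i$ to the nearest integer changes the ratios $p_i/p_{i-1}$ by a vanishing amount and thus does not affect the limiting constant $c/(4\log c)$.
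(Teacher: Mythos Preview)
Your proposal is correct and follows essentially the same route as the paper's proof. Both arguments identify the contribution at level $i$ as the average of the $p_i$ smallest order statistics among $N_i$ i.i.d.\ exponentials (the paper calls this quantity $f(N_i,p_i)$ and simply asserts the ``pick uniformly'' step that you justify via exchangeability), bound it by roughly $p_i/(2N_i)$, and then optimize with geometrically growing $p_i\approx e^i$ and $\ell\approx\log d$; the paper fixes $c=e$ from the start and uses the cutoff $\ell=\lfloor\log d\rfloor-A$ with $A\to\infty$, whereas you optimize over $c$ and choose $\ell$ so that $c^\ell\sim d/\sqrt{\log d}$, but these choices lead to the same constant.
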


\begin{proof}
Let us first introduce a notation. Fix two positive integers $n\geq k\geq0$ and take ${\bf e}_1,{\bf e}_2,\dots, {\bf e}_n$ 
a family of i.i.d. exponential random variables with parameter one. Pick uniformly one of the $k$ smallest, we denote by $f(n,k)$ its expectation. 
The mean passage time of $\mathcal{P}(\ell, p_1,..., p_\ell)$ can be written
\begin{multline}\label{Eq:AOptimiser}
\mathbb{E}[\tau(\mathcal{P}(\ell, p_1,..., p_\ell))]=
f(2(d-1),p_1)+f\left(p_1(2(d-1)-2p_1),p_2\right) +\dots \\
+f\left(p_{\ell -2}(2(d-1)-2\sum_{i=1}^{\ell -1}p_i),p_{\ell}\right)
+f(p_{\ell},1).
\end{multline}  
From the well-known representation of the order statistics of ${\bf e}_1,\dots,{\bf e}_n$ 
$$
\left({\bf e}_{(1)},{\bf e}_{(2)},\dots,{\bf e}_{(n)}\right)\stackrel{\text{(law)}}{=} 
\left(\frac{1}{n} {\bf e}_1, \frac{1}{n} {\bf e}_1 +\frac{1}{n-1} {\bf e}_2 , \dots, \frac{1}{n} {\bf e}_1 +\frac{1}{n-1} {\bf e}_2 +\dots + {\bf e}_n \right).
$$
one readily deduces that 
$$ f(n,k)={1\over k}\sum_{i=1}^k\sum_{j=0}^{i-1}{1\over n-j}\leq {k+1\over 2(n-k)},$$
which implies from (\ref{Eq:AOptimiser}) that 
\begin{multline*}
\mathbb{E}[\tau(\mathcal{P}(\ell, p_1,..., p_\ell))]\leq
\frac{1}{2}\big( \frac{p_1+1}{2(d-1)-p_1} + \frac{p_2+1}{2(d-1)p_1-(2p_1^2 +p_2)} + \dots\\
 + \frac{p_{\ell }+1}{2(d-1)p_{\ell -1}-(2p_{\ell -1}\sum_{i\leq \ell -1}p_i +p_{\ell })}\big)
+ {1\over p_\ell}.
\end{multline*}  
We asymptotically minimize the above right-hand side by introducing a positive integer $A$ and taking $\ell =\lf \log d\rf-A$, and $p_i=\lf e^i\rf$. We obtain  
$$\limsup_{d\to+\infty}{d\over \log d}\inf_{\ell,p_1,...,p_\ell}\mathbb{E}[\tau(\mathcal{P}(\ell, p_1,..., p_\ell))]\leq
\frac{e}{4}\left( {1\over 1-{e\over e-1}e^{-A}}\right)
$$
which gives the desired bound when $A$ grows to infinity.
\end{proof}
If we use the same procedure to build a path from $H_1$ to $H_2$, $H_2$ to $H_3$,..., $H_{n-1}$ to $H_n$,
the family of passage times of these $n$ paths is i.i.d. by construction. It then follows
from the law of large numbers that 
$$
\limsup_{d\to\infty } \mu \frac{d}{\log d}\leq e/4=0.679...
$$

\subsection*{Comments on the result}
\begin{enumerate}
\item We obtain a short and constructive proof of the bound $\mu \leq c^{\text{st}}\log d/d$. We are however not able 
to achieve Dhar's optimal bound with the constant $1/2$.
The latter was obtained with a recursive argument
applied to $B_t$, but cannot provide for an effective path going to $H_n$.
\item Kesten's original proof of the existence of a path whose time constant is less than some constant (11 in his proof) times  ${\log d\over d}$ was also non constructive. However, it indicates  that a path of length $\log d$ achieves this optimal order. This coincides with our choice of $\ell$ in the proof of Theorem 1.  If one restricts the scope to paths of length 3 ($\ell=2$), one already gets an interesting bound i.e. 
$$
\limsup_{d\to\infty} d^{2\over3} \inf_{ p_1,p_2}\mathbb{E}[\tau(\mathcal{P}(2, p_1, p_2))]\leq C
$$
which proves that, in dimension large enough, the horizontal speed is bigger than the diagonal speed, which has been proved to be of order $\sqrt d$ (see next section).
More generally, optimizing $\mathbb{E}[\tau(\mathcal{P}(\ell, p_1,..., p_\ell))]$ for a {\it fixed} $\ell$
leads to a bound  $\mu(d)\leq C/d^{\ell\over\ell+1}$.

\item Our result could be extended to a large class of distributions over the passage times, provided one has a good upper bound for 
$f(n,k)$. This can be done if $\tau$ has a first moment and a nice density near zero, not null at zero (such assumptions on $\tau$ were considered by Kesten).
\end{enumerate}



\section{Discussion on the diagonal speed and the limiting shape}

Richardson \cite{Ric} proved that $B_t$ grows linearly and has a limit shape:
 there exists a nonrandom set $B_0 \subset \bbR^{d+1}$ such that, for all $\eps>0$, 
$$
\mathbb{P}\left((1-\eps)B_0 \subset \frac{B_t}{t} \subset (1+\eps) B_0\right)\stackrel{t\to\infty }\to 1.  
$$
(Richardson didn't exactly deal with first passage percolation but with a class of discrete growth processes, including Eden's growth process.)
The convergence also holds almost surely, the most general result is due to Cox and Durrett \cite{Cox}.

The shape $B_0$ appears to be a ball of a certain norm, which is not explicit. Eden and Richardson observed on simulations that $B_0$ looks circular in dimension two (though, they only performed the simulations up to a few hundreds vertices in $B_t$).
Kesten has shown that, surprisingly enough, this is not the case for FPP with exponential passage times, 
at least when $d>650000$.  
We conclude this paper by short arguments showing that $B_0$ is not the euclidian ball when $d\geq 35$.
Let us denote by $\mu^\star$ the "diagonal" time constant\footnote{We do not use the convention of Kesten \cite{Kes} for the definition of $\mu^\star$, yielding to a different factor of $\sqrt{d}$ between his statement and ours.}:
$$
\mu^\star =\lim_{\mbox{a.s.}} \frac{T(\mathcal{J}_n)}{n},
$$
where $\mathcal{J}_n= \set{x_1+ x_2+\dots +x_{d}=n\sqrt{d} }$. Kesten (\cite{Kes} Th.8.3) observed that $\mu^\star \geq 1/2e\sqrt{d}=0.184.../\sqrt{d}$ which, compared to $\mu =\mathcal{O}(\log d/d)$, gives that $\mu <\mu^\star$ if $d$ is large enough, yielding that $B_0$ is not the Euclidean ball. Carrying his argument a little further, we obtain a slightly improved bound for $\mu^\star$:
\begin{theo}\label{Th:Diago}
For all $d\geq 2$,
\begin{equation}\label{Eq:mustar}
\mu^\star\geq \frac{\sqrt{\alpha_\star^2-1}}{2\sqrt{d}}\approx {0,3313...\over\sqrt d}
\end{equation}
where $\alpha_\star$ is the non null solution of $\coth\alpha=\alpha$.
\end{theo}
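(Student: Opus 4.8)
The plan is to control an exponential functional of the growing cluster and turn it, via a Chebyshev--Markov estimate, into a lower bound on the hitting time $T(\mathcal{J}_n)$. For a parameter $\lambda>0$ to be fixed at the very end, set $S(x):=x_1+x_2+\dots+x_d$ and
$$
W_t\ :=\ \sum_{x\in B_t}e^{\lambda\,S(x)}.
$$
The elementary observation driving everything is that if $B_t$ has reached the diagonal hyperplane $\mathcal{J}_n=\{\,S=n\sqrt d\,\}$ then $\max_{x\in B_t}S(x)\ge n\sqrt d$, hence $W_t\ge e^{\lambda n\sqrt d}$; so an upper bound on $\mathbb{E}[W_t]$ yields a tail bound for $T(\mathcal{J}_n)$, and the optimal $\lambda$ will turn out to be governed by the equation $\coth\alpha=\alpha$.

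First I would use the Markov property of $(B_t)$ (the memorylessness recalled in the Introduction): a vertex $z\notin B_t$ is appended at rate equal to the number of edges joining it to $B_t$, and this adds $e^{\lambda S(z)}$ to $W_t$. Summing over $z$, swapping the order and over-counting gives, for the generator $\mathcal{L}$ of the growth process,
$$
\mathcal{L}W(B)\ =\ \sum_{z\notin B}\#\{\,y\in B:\,y\sim z\,\}\,e^{\lambda S(z)}\ \le\ \sum_{y\in B}\sum_{z\sim y}e^{\lambda S(z)}\ =\ 2d\cosh(\lambda)\,W(B),
$$
where the last equality uses that each vertex of $\bbZ^d$ has $d$ neighbours with $S$ raised by one and $d$ with $S$ lowered by one. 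Hence $\frac{d}{dt}\mathbb{E}[W_t]\le 2d\cosh(\lambda)\,\mathbb{E}[W_t]$, and since $W_0=1$ this yields $\mathbb{E}[W_t]\le\exp\!\big(2d\cosh(\lambda)\,t\big)$. Markov's inequality then gives
$$
\mathbb{P}\big(T(\mathcal{J}_n)\le t\big)\ \le\ \mathbb{P}\big(W_t\ge e^{\lambda n\sqrt d}\big)\ \le\ \exp\!\big(-\lambda n\sqrt d+2d\cosh(\lambda)\,t\big).
$$

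Next, for fixed $\eps\in(0,1)$ I would take $t=(1-\eps)\,\lambda n\sqrt d/(2d\cosh\lambda)$, so that the bound becomes $e^{-\eps\lambda n\sqrt d}$, which is summable in $n$. Since $T(\mathcal{J}_n)/n\to\mu^\star$ almost surely, this forces $\mu^\star\ge(1-\eps)\,\lambda/(2\sqrt d\cosh\lambda)$, and letting $\eps\downarrow0$ gives $\mu^\star\ge\lambda/(2\sqrt d\cosh\lambda)$ for every $\lambda>0$. It remains to optimize the right-hand side: the maximum of $\lambda/\cosh\lambda$ is attained where $\cosh\lambda=\lambda\sinh\lambda$, i.e. $\coth\lambda=\lambda$, that is $\lambda=\alpha_\star$. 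Combining $\cosh^2\alpha_\star-\sinh^2\alpha_\star=1$ with $\cosh\alpha_\star=\alpha_\star\sinh\alpha_\star$ gives $\cosh\alpha_\star=\alpha_\star/\sqrt{\alpha_\star^2-1}$, whence $\alpha_\star/\cosh\alpha_\star=\sqrt{\alpha_\star^2-1}$ and finally $\mu^\star\ge\sqrt{\alpha_\star^2-1}/(2\sqrt d)$, as stated. (Kesten's bound $1/(2e\sqrt d)$ is recovered from the cruder estimate $\cosh\lambda\le e^{\lambda}$, whose optimum is $\lambda=1$; keeping the exact factor $\cosh\lambda$ is precisely the improvement.)

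The only genuinely technical point is making the differential inequality for $\mathbb{E}[W_t]$ rigorous, since $W_t$ is a sum over a randomly growing set: one should first check that $\mathbb{E}[W_t]<\infty$, then apply Dynkin's formula to the process frozen at the first time $|B_t|$ exceeds an integer $K$, and finally let $K\to\infty$ by monotone convergence to recover $\mathbb{E}[W_t]\le\exp(2d\cosh(\lambda)t)$. Everything else is elementary calculus together with the almost-sure convergence $T(\mathcal{J}_n)/n\to\mu^\star$ (or, equivalently, a direct Borel--Cantelli argument from the summable tail bound).
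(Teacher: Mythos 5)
Your proof is correct and takes a genuinely different route from the paper's. The paper proceeds by a ``first-moment'' union bound over self-avoiding paths: Lemma~\ref{Lem:Dk} counts the paths of length $k\sim\alpha n\sqrt d$ hitting $\mathcal J_n$ for the first time at step $k$ (via the hitting-time distribution of a one-dimensional random walk), this count is multiplied by the Gamma tail $\mathbb{P}(\Gamma(k,1)\le nx)\le(nxe/k)^k$, and the sum over $k$ is then optimized in $\alpha$. You instead exploit the Markov property of $(B_t)$ for exponential passage times, introduce the Lyapunov function $W_t=\sum_{x\in B_t}e^{\lambda S(x)}$, show via the generator bound $\mathcal L W\le 2d\cosh(\lambda)W$ and a localization/Gr\"onwall argument that $\mathbb{E}[W_t]\le e^{2d\cosh(\lambda)t}$, and then apply Markov's inequality. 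The two optimizations are Legendre-dual: the paper's $\alpha=k/(n\sqrt d)$ is a length ratio and yours is a tilting parameter, yet both critical equations reduce to $e^{2\alpha}=(\alpha+1)/(\alpha-1)$, i.e.\ $\coth\alpha=\alpha$, giving the same constant. Your approach is closer in spirit to Dhar's recursive treatment of $B_t$ (alluded to in the paper's comments), and it avoids any explicit path enumeration, at the price of a stochastic-calculus technicality that you correctly flag (showing $\mathbb{E}[W_t]<\infty$, which needs the a.s.\ non-explosion $|B_t|<\infty$ so that the stopping times $T_K=\inf\{t:|B_t|>K\}$ tend to infinity); conversely, the paper's path-counting is more elementary and, in principle, adaptable beyond the exponential distribution, whereas your argument hinges on the Markov property of $B_t$ which is specific to exponential passage times.
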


\begin{proof}
\newcommand{\Dk}{D_k^{(n)}}
The proof is elementary, it mainly consists in 
bounding the probability that a fixed path going from the origin to $\mathcal{J}_n$ has small passage times.

We denote by $D_k^{(n)}$ the set of self-avoiding paths of length $k$ (in the sense that they do not run twice
through the same edge) starting from zero and hitting $\mathcal{J}_n$ for the first time at time $k$. 
Because of self-avoidingness, passage times are independent along such a path.
Fix a real number $x>0$, since a path from the origin to $\mathcal{J}_n$ has at least $n\sqrt{d}$ edges,
\begin{align}
\mathbb{P}(T(\mathcal{J}_n)\leq n x)
&\leq \mathbb{P}(\mbox{ there exists $k$ and a path $\mathcal{P}$ in $\Dk$ s.t. }\tau(\mathcal{P})\leq nx)\notag\\
&=\sum_{k\geq n\sqrt{d}}\Dk \times \mathbb{P}(\Gamma(k,1) \leq nx)\label{Eq:MajorationDiagonale},
\end{align}
where $\Gamma(k,1)$ is a Gamma$(k,1)$ random variable.
The following estimate is straightforward:
$$
\mathbb{P}(\Gamma(k,1) \leq a)\leq \left(ae/k\right)^k.
$$

\begin{lem}\label{Lem:Dk}
For $k\sim \alpha n\sqrt{d}$ with some constant $\alpha \geq 1$,
$$
\card(\Dk)\sim_{k\to \infty} (2d)^{k} \sqrt{\frac{1}{2\pi n\sqrt{d}}}\left(\frac{\alpha}{(\alpha+1)^{(\alpha+1)/2\alpha}(\alpha-1)^{(\alpha-1)/2\alpha}} \right)^{k}.
$$
\end{lem}
\begin{proof}[Proof of Lemma \ref{Lem:Dk}]
We evaluate the number of (non necessarily self-avoiding) paths of 
length $k$ which start from the origin and hit $\mathcal{J}_n$ for the first time at time $k$.
Such a path
$$
\mathbf{S}:0\to S_1\to \dots\to S_k
$$
is seen as one sample of the standard symmetric random walk in $\bbZ^d$. Its projection
$\mathbf{X}=(0,X_1,\dots,X_k)$ on the axis $x_1=x_2=\dots =x_d$ is a symmetric one-dimensional random walk with increments $\pm 1/\sqrt{d}$. Applying Th.9.1 in \cite{Rev} and the Stirling formula gives
\begin{align*}
(2d)^{-k}\Dk&= \mathbb{P} (\mathbf{S}\mbox{ hits }\mathcal{J}_n \mbox{ for the first time at }k)\\
&=\frac{n\sqrt{d}}{k} \binom{k}{(k+n\sqrt{d})/2}2^{-k}\\
&\sim \sqrt{\frac{1}{2\pi n\sqrt{d}}}\left(\frac{\alpha}{(\alpha+1)^{(\alpha+1)/2\alpha}(\alpha-1)^{(\alpha-1)/2\alpha}} \right)^{k}.
\end{align*}
\end{proof}
Going back to \eqref{Eq:MajorationDiagonale} gives
\begin{align*}
\mathbb{P}(T(\mathcal{J}_n)\leq n x)
&\leq 2\sum_{k\geq n\sqrt{d}} (2d)^k\sqrt{\frac{1}{2\pi n\sqrt{d}}}\left(\frac{\alpha}{(\alpha+1)^{(\alpha+1)/2\alpha}(\alpha-1)^{(\alpha-1)/2\alpha}} \right)^{k} (nxe/k)^k,\\
&\leq 2\sum_{k\geq n\sqrt{d}}  \sqrt{\frac{1}{2\pi n\sqrt{d}}}\left(\frac{2\sqrt{d}xe}{(\alpha+1)^{(\alpha+1)/2\alpha}(\alpha-1)^{(\alpha-1)/2\alpha}} \right)^{k},
\end{align*}
where $\alpha =k/n\sqrt{d}$. This sum decays exponentially provided that
$$
x< \frac{1}{2e\sqrt{d}}\ \sup_{\alpha >1} \set{(\alpha+1)^{(\alpha+1)/2\alpha}(\alpha-1)^{(\alpha-1)/2\alpha}}.
$$
This supremum is attained for the unique non null solution $\alpha_\star$ of $\coth\alpha =\alpha$. It is equal to 
$e\sqrt{\alpha_\star^2-1}$.
Theorem \ref{Th:Diago} then follows from the Borel-Cantelli Lemma.
\end{proof}

\begin{coro}\label{Th:optimalD}
In dimension $d\geq35$, the limiting shape is not the Euclidean ball.
\end{coro}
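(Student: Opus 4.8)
The plan is to argue by contradiction, comparing the axial time constant $\mu$ with the diagonal one $\mu^\star$. First I would note that the limiting shape $B_0$ inherits all the symmetries of $\bbZ^d$ fixing the origin: the law of the family $\{\tau(x,y)\}$ is invariant under permutations of the coordinates and under the sign changes $x_i\mapsto -x_i$, hence so is $B_0$. In particular $B_0$ is symmetric about the origin, so if it were a Euclidean ball it would be the ball $\{x\in\bbR^d:|x|\le R\}$ centered at the origin, for some $R>0$.

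Next I would translate $\mu$ and $\mu^\star$ into support data of $B_0$. The hyperplane $H_n=\{x_1=n\}$ lies at Euclidean distance $n$ from the origin, and since $T(H_n)/n\to\mu$ (as recalled in the introduction) while $B_t/t\to B_0$, the dilate $tB_0$ first meets $H_n$ at $t\sim\mu n$; equivalently $\sup_{x\in B_0}x_1=1/\mu$. In the same way $\mathcal{J}_n=\{x_1+\dots+x_d=n\sqrt d\}=\{\langle x,u\rangle=n\}$, where $u=(1,\dots,1)/\sqrt d$ is a unit vector, lies at Euclidean distance $n$ from the origin, and $T(\mathcal{J}_n)/n\to\mu^\star$ yields $\sup_{x\in B_0}\langle x,u\rangle=1/\mu^\star$. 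For the ball $\{|x|\le R\}$ both suprema equal $R$, so $B_0$ being that ball would force $\mu=\mu^\star$.

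It then remains to exhibit a strict inequality $\mu(d)<\mu^\star(d)$ for every $d\ge 35$. By Theorem~\ref{Th:Diago}, $\mu^\star(d)\ge\sqrt{\alpha_\star^2-1}/(2\sqrt d)>0.3313/\sqrt d$ for all $d\ge 2$, so it suffices to show $\mu(d)\sqrt d<0.3313$ for $d\ge 35$. For this I would invoke Dhar's result $\mu(d)\sim\tfrac12\log d/d$ together with the numerical computations in \cite{Dhar}, which show that this asymptotic regime is essentially reached for moderate $d$ and give $\mu(d)\sqrt d<0.3313$ for all $d\ge 35$ (note that $\log d/\sqrt d$ is decreasing for $d\ge 8$). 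With the previous paragraph this contradicts $\mu=\mu^\star$, which proves the corollary. Two remarks: it is the sharpened constant $\sqrt{\alpha_\star^2-1}/2\approx0.3313$ of Theorem~\ref{Th:Diago}, in place of Kesten's $1/2e\approx0.184$, that lowers the threshold to $35$; and the explicit finite-$d$ estimate extracted from the proof of Theorem~1 carries the non-optimal constant $e/4$ and is far too crude at $d=35$, which is why Dhar's sharper input is needed.

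The hard part here is the reduction ``$B_0$ a Euclidean ball $\Rightarrow\mu=\mu^\star$'': one must be sure that $\mu$ and $\mu^\star$, which are defined through the hitting times of the affine \emph{hyperplanes} $H_n$ and $\mathcal{J}_n$, are genuinely the reciprocals of the support-function values of $B_0$ in the directions $e_1$ and $u$ --- this uses both the symmetries of $B_0$ and the identification $\mu=\lim T(H_n)/n$ --- and, on the analytic side, that one disposes of a truly non-asymptotic upper bound on $\mu(d)$ valid already at $d=35$, which is exactly where the numerics of \cite{Dhar}, rather than Theorem~1, enter.
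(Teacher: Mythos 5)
Your argument follows the paper's proof exactly: the paper likewise obtains the corollary in one line by combining the lower bound $\mu^\star(d)\ge 0.3313/\sqrt d$ from Theorem~\ref{Th:Diago} with Dhar's numerical upper bound on $\mu(d)$, checking that $\mu(35)<0.3313/\sqrt{35}\le\mu^\star(35)$, and you correctly identify that Dhar's numerics (not the asymptotic constant $e/4$ of Theorem~1) are what is needed at $d=35$. The only difference is that you also spell out the geometric reduction --- symmetry of $B_0$ under coordinate permutations and sign flips, and the identification of $1/\mu$ and $1/\mu^\star$ with the support function of $B_0$ in the directions $e_1$ and $(1,\dots,1)/\sqrt d$ --- which the paper treats as known and leaves implicit.
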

\begin{proof}
Combining Theorem \ref{Th:Diago} with Dhar's numerical computations we obtain for $d=35$ that $\mu(35)\leq 0.93 \log(2d)/2d< 0.3313 /\sqrt{d}\leq \mu^\star(35)$.
\end{proof}

\subsection*{Aknowledgements}
We are indebted to Olivier Garet who kindly introduced us to the subject, and for very instructive comments on
a preliminary version of this paper. We warmly thank Deepak Dhar for explaining us the
lower bound in \cite{Dhar}.
The second author would like to thank ANR project MEMEMO for financial support.

\vfill

\begin{tabular}{l l l}
Olivier \textsc{Couronn\'e}      & Nathana\"el \textsc{Enriquez}       & Lucas \textsc{Gerin} \\
\texttt{ocouronn@u-paris10.fr} & \texttt{nenriquez@u-paris10.fr} & \texttt{lgerin@u-paris10.fr}
\end{tabular}

\vspace{6mm}

\noindent\textsc{Universit\'e Paris-Ouest}, Laboratoire \textsc{Modal'X}\\
200 avenue de la R\'epublique, 92001 \textsc{Nanterre.}
\end{document}